\newtheorem{theorem}{Theorem}
\newtheorem{lemma}{Lemma}
\newtheorem{corollary}{Corollary}
\newtheorem{remark}{Remark}
\title{Dirichlet Fractional Laplacian in multi-tubes}
\author{F.L. Bakharev\thanks{St.Petersburg State University, Universitetskaya emb. 7-9, St.Petersburg, 199034, Russia, e-mail: f.bakharev@spbu.ru}
\ and
\setcounter{footnote}{6}
A.I. Nazarov\thanks{St.Petersburg Department of Steklov Mathematical Institute of Russian Academy of Sciences, Fontanka 27, St.Petersburg, 191023, Russia, and St.Petersburg State University, Universitetskaya emb. 7-9, St.Petersburg, 199034, Russia, e-mail: nazarov@pdmi.ras.ru}
}
\begin{document}

\maketitle

\noindent{\bf Abstract.} We describe the spectrum structure for the restricted Dirichlet fractional Laplacian in multi-tubes, i.e. domains with cylindrical outlets to infinity. Some new effects in comparison with the local case are discovered.

\medskip

\noindent{\bf Keywords:} fractional Laplacian, multi-tubes, Dirichlet spectrum, virtual level

\medskip

\noindent{\bf AMS classification codes:} Primary: 35R11, Secondary: 81Q10.

\section{Introduction}
The goal of this paper is obtaining a better understanding of spectral properties of some non-local operators in domains with cylindrical outlets to infinity. This study has various motivations.

The standard positive Laplacian $-\Delta$ in a domain $\Omega\subset \mathbb{R}^n$ corresponds, up to a multiplicative constant, to the quantization of the kinetic energy $\frac{p^2}{2m}$ of a free particle with momentum $p$ and mass $m$, confined in $\Omega$. This is because the quantization procedure maps the classical momentum $p$ to the operator $-i\nabla$. The Dirichlet condition in this case means the hard walls of the domain. However, the relativity theory tells that the choice of kinetic energy as above is not appropriate for high energies and for a massive relativistic particle it should be replaced by $\sqrt{p^2+m^2}$. Thus, the corresponding quantum Hamiltonian should be chosen as $\sqrt{-\Delta+m^2}$ (see, e.g., \cite{Herbst77, Nardini86, CaMaSi90} for further details). This gives an inspiration to study fractional powers of the Helmholtz operator, especially their spectral properties. Notice that such powers are \textbf{non-local} operators, which significantly complicates the problem.

We discuss mainly the \textbf{fractional Laplacian} though our results can be transferred to the fractional Helmholtz operator.

As in case of a non-relativistic particle, the important complication to the statement of the problem is brought by the boundary condition. In contrast to the local case, we have a non-unique procedure to impose the Dirichlet condition. The first choice is to take the spectral power $(-\Delta_\Omega)^s$ of the conventional Dirichlet Laplacian in $\Omega$. In this case the analysis of spectrum of such a problem reduces to the analysis of the standard Dirichlet Laplacian. 

The second way is to consider the so-called \textbf{restricted} Dirichlet fractional Laplacian  $\mathcal{A}^\Omega_s
$. It is defined by the quadratic form
$$
a_s^\Omega[u]=(\mathcal{A}^\Omega_s u,u):=\int\limits_{\mathbb{R}^n}|\xi|^{2s} |\mathcal{F}_n u(\xi)|^2\,d\xi, 
$$
where $\mathcal{F}_n$ stands for the $n$-dimensional Fourier transform
$$
\mathcal{F}_n u(\xi)=\frac{1}{(2\pi)^\frac n2}\int\limits_{\mathbb{R}^n}e^{-i\xi\cdot x}u(x)\, dx.
$$
The domain of the quadratic form $a_s^\Omega$ is defined as follows:
$$
\mathop{\rm Dom}\nolimits (a_s^\Omega)=\widetilde{H}^s(\Omega):=
\{u\in H^s(\mathbb{R}^n)\colon \mathop{\rm supp}\nolimits u\subset \overline{\Omega}\},
$$
where $H^s(\mathbb{R}^n)$ is the classical Sobolev--Slobodetskii space (see, e.g., \cite[Subsection 2.3.3]{Triebel})
$$
H^s(\mathbb{R}^n)=\{u\in L_2(\mathbb{R}^n)\colon |\xi|^s \mathcal{F}_n u(\xi)\in L_2(\mathbb{R}^n)\}.
$$

In what follows, we assume $s\in (0,1)$. This case has a strong connection to the theory of stochastic processes. While the Laplacian $\Delta$ in $\mathbb{R}^n$ can be considered as a generator of the standard Brownian semigroup $\exp (t\Delta)$, the fractional Laplacian, or more exactly the operator $-(-\Delta)^s$ for $s\in (0,1)$, stands for the generator of the L\'evi-stable motion semigroup. In both cases restricting to the domain $\Omega$ and posing the Dirichlet conditions means posing the killing or absorbing boundary condition for the original random process (see, e.g., \cite{Nardini86, CaMaSi90, GaSt19}).

The study of spectral problems for the conventional Dirichlet Laplacian in domains with cylindrical outlets to infinity has a long history. Typically the spectra of such problems consist of continuous spectra covering the ray $[\lambda_\dagger,+\infty)$ with some positive threshold $\lambda_\dagger$ and a number of eigenvalues (bound states) below the threshold which may appear because of the geometrical structure of the domain in a finite region (the junction). Usually this takes place if it is possible to inscribe a sufficiently large body into the junction (see, e.g., \cite{AvBeGiMa, NazSA, NaRuUu2013, Pa2017}) or if the cylinder is  bended or broken (see, e.g., \cite{ESS, DuEx1995, GoJa1992}).  
Typically, a finite number of eigenvalues may appear under the threshold of the continuous spectrum. In some special cases it is possible to prove the uniqueness of such an eigenvalue (see, e.g., \cite{NazSA, NaRuUu2013, BaMaNa}). 

For the relativistic case, we know only two recent works \cite{ExHo2021} and \cite{BoBrKrOu-2021} which discuss a similar problem for the Dirac operator $-i\nabla$. However, we stress that in contrast to $\mathcal{A}^\Omega_s$, this operator is local.\medskip

The structure of the paper is the following. In Section \ref{s:prelim} we recall some well-known facts about the Caffarelli--Silvestre extension and prove an important auxiliary lemma. Section \ref{s:straight} is devoted to the spectrum of $\mathcal{A}^\Omega_s$ in a (straight) tube. 

In Section \ref{s:mult} we study the spectral properties of $\mathcal{A}^\Omega_s$ in a {\bf perturbed multi-tube}. This means that outside some compact set $\mathcal{K}$, the domain $\Omega$ coincides with a finite union of non-intersecting congruent semi-tubes, cf. \cite{LiNa2019}. We prove that, like in the local case $s=1$, the essential spectrum coincides with that in one semi-tube. However, in comparison with the local case, this result holds only under the following additional assumption: the axes of semi-tubes are not co-directional.

In Section \ref{s:incr} we study the influence of a local widening of a tube on the spectrum of the operator $\mathcal{A}^\Omega_s$. It is well known (see, e.g., \cite{EK} and references therein) that in the local case, arbitrary such widening produces points of the discrete spectrum under the threshold (in other words, the threshold is a {\bf virtual level} for the Dirichlet Laplacian). This effect obviously holds for the {\bf spectral} fractional Laplacian. The same statement turns out to be true for the restricted fractional Laplacian. A bit unexpectedly, the proof for $n=2$, $s\le\frac 12$ is essentially more complicated than in other cases.

\medskip

We use letter $C$ and $c$ (with or without indices) to denote various positive
constants. To indicate that $C$ depends on some parameters, we list 
them in the parentheses: $C(\dots)$.

\section{Caffarelli--Silvestre extensions}
\label{s:prelim}

The relation between fractional differential operators and generalized harmonic extensions
was discovered more than fifty years ago \cite{MO} and became popular thanks to the celebrated work \cite{CaSi}. Namely, given $u\in \widetilde{H}^s(\Omega)$, the function 
\begin{equation}
\label{CS-ext}
U_{s}(x,y)=
\int\limits_{\mathbb{R}^n} {\cal P}_s(x-\widetilde{x},y)u(\widetilde{x})\,d\widetilde{x}, \qquad x\in \mathbb{R}^n,\ y\in\mathbb{R}_+, 
\end{equation}
with the generalized Poisson kernel
$$
{\cal P}_s(x,y)=\frac{\Gamma(\frac{n+2s}{2})}{\pi^{\frac{n}{2}}\Gamma(s)}\,\frac{y^{2s}}{(|x|^2+y^2)^{\frac n2+s}},
$$
minimizes the weighted Dirichlet integral 
$$
\mathcal{E}_s^\Omega (W)=\int\limits_0^\infty \int\limits_{\mathbb{R}^n} y^{1-2s} |\nabla W (x,y)|^2 \,dxdy
$$
over the set 
$$
\mathcal{W} (u)=\{W=W(x,y)\ \colon\  \mathcal{E}_s^\Omega(W)<\infty, \  W|_{y=0}=u\}
$$
and solves the boundary value problem
$$
-\,{\rm div} (y^{1-2s}\nabla W)=0 \quad \mbox{in} \quad \mathbb{R}^n\times \mathbb{R}_+; \quad W|_{y=0}=u.
$$
Moreover, the following relations hold:
\begin{equation}
\label{qq}
\mathcal{A}_s^\Omega u=-C(s)\! \lim\limits_{y \to 
0^+}y^{1-2s}~\!{\partial_y} U_s(\cdot,y),\quad
a_s^\Omega[u]= C(s) \,\mathcal{E}_s^\Omega(U_{s}), 
\end{equation}
where $C(s)=\frac{4^s \Gamma(s+1)}{2s \Gamma(1-s)}$ (the limit is understood in the sense of functionals on $\widetilde{H}^s(\Omega)$ and pointwise at every point of smoothness of $u$).

The function $U_s$ is usually called the Caffarelli--Silvestre extension of $u$. The set $\mathcal{W}(u)$ is also called the set of admissible extensions of $u$.
\medskip

The following statement will be used in Section \ref{s:incr}.

\begin{lemma}\label{L-2}
Let $n>2-2s$.\footnote{This is a restriction only for $n=1$.} Assume that $\Omega$ is bounded. Then for any function $u\in \widetilde{H}^s(\Omega)$, its Caffarelli--Silvestre extension belongs to $L_2(\mathbb{R}^n\times\mathbb{R}_+)$ with weight $y^{1-2s}$.
\end{lemma}

\begin{proof}
Using formula \eqref{CS-ext} and the Fourier transform in $x$ we obtain
\begin{multline*}
I:=\int\limits_0^\infty \int\limits_{\mathbb{R}^{n}}y^{1-2s}|U_s(x,y)|^2\,dx dy=
\int\limits_0^\infty y^{1-2s}\int\limits_{\mathbb{R}^{n}} |\mathcal{F}_{n}U_s(\xi,y)|^2\, d\xi dy\\
=(2\pi)^\frac n2\int\limits_{\mathbb{R}^{n}}|\mathcal{F}_{n}u(\xi)|^2
\int\limits_0^\infty y^{1-2s} |\mathcal{F}_{n}\mathcal{P}_s(\xi,y)|^2\,dy d\xi.
\end{multline*}
Notice that the function $\mathcal{P}_s$ is spherically symmetric in $x$ and homogeneous: $\mathcal{P}_s(x,y)= y^{-n}\mathcal{P}_s(y^{-1}x,1)$. This implies
$$
\mathcal{F}_n\mathcal{P}_s(\xi,y)=\mathcal{F}_n\mathcal{P}_s(y\xi ,1)=:\widehat{p}_s(y|\xi|).
$$
Therefore, we can change the variable in the last integral and obtain
$$
I=(2\pi)^\frac n2\int\limits_{\mathbb{R}^{n}} |\xi|^{2s-2}  |\mathcal{F}_{n}u(\xi)|^2\, d\xi\int\limits_0^\infty t^{1-2s} |\widehat{p}_s(t)|^2 \,dt.
$$
Since $\mathcal{P}_s$ is smooth in $x$, $\widehat{p}_s$ is rapidly (in fact, exponentially) decaying at infinity, and the second integral evidently converges. 
Since $u$ is compactly supported, its Fourier transform is smooth, and the first integral converges for $2-2s<n$. This concludes the proof.
\end{proof}

\section{Spectral problem in a straight tube}
\label{s:straight}

Let $\omega$ be a bounded domain (connected open set) in $\mathbb{R}^{n-1}$, and let $Q$ be a tube (cylinder)
\begin{equation}\label{cyl}
Q =\omega \times \mathbb{R}=\{x=(x',z)\colon x'\in\omega, \, z\in\mathbb{R}\}.
\end{equation}

Recall that the space $\widetilde{H}^s(\omega)$ is compactly embedded into $L_2(\omega)$ and thus the spectrum of the operator $\mathcal{A}_s^\omega$ is purely discrete and consists of a sequence of eigenvalues
$$
0<\lambda_1(\mathcal{A}_s^\omega)<\lambda_2(\mathcal{A}_s^\omega)\le \lambda_3(\mathcal{A}_s^\omega)\le\ldots\le \lambda_k(\mathcal{A}_s^\omega) \le\ldots \to +\infty.
$$
The corresponding sequence of eigenfunctions $\varphi_k(\mathcal{A}_s^\omega)$ can be chosen orthonormal in $L_2(\omega)$. 

The following assertion is more or less standard. We provide its proof for completeness.

\begin{lemma}
The first eigenvalue $\lambda_1(\mathcal{A}_s^\omega)$ (in what follows we denote it by $\Lambda_s$) is simple and the corresponding eigenfunction $\varphi_1(\mathcal{A}_s^\omega)$ can be chosen positive in $\omega$.    
\end{lemma}

\begin{proof}
By \cite[Theorem 3]{MN}, for any $u\in\widetilde{H}^s(\omega)$ we have $|u|\in\widetilde{H}^s(\omega)$, and the inequality $a_s^\omega[|u|]\le a_s^\omega[u]$ holds. Therefore, without loss of generality we can assume $\varphi_1(\mathcal{A}_s^\omega)$ non-negative. Then the strong maximum principle \cite[Theorem 2.5]{IaMoSq2015} (see also \cite{MN2019}) shows that $\varphi_1(\mathcal{A}_s^\omega)>0$ in $\omega$.
Finally, if $\Lambda_s$ was multiple eigenvalue, we could find a sign-changing eigenfunction, a contradiction. 
\end{proof}

The max-min principle (see, e.g., \cite[\S 10.2]{BS}) easily implies that the eigenvalues of the operator $\mathcal{A}_s^\omega$ decrease when the domain $\omega$ expands. 

\begin{remark}
\label{rem1}
The inequality between restricted and spectral fractional Laplacians (\cite[Theorem 2]{MuNa2014}, see also the survey \cite{Na2021}) implies that 
$$
\lambda_k(\mathcal{A}_s^\omega)<\big(\lambda_k(-\Delta_\omega)\big)^s,\qquad k\in\mathbb{N}. 
$$
Exact values of $\lambda_k(-\Delta_\omega)$ are well known for several domains. For $\lambda_k(\mathcal{A}_s^\omega)$, up to our knowledge, no exact values are known, and sufficiently sharp estimates are obtained only in the ball, see \cite{DyKuKw2017} and references therein. We also mention the paper \cite{Kw2012}, where, besides two-sided estimates for $\lambda_k(\mathcal{A}_s^I)$ on the interval $I=(-1,1)$, the two-term asymptotics was derived:
$$
\lambda_k(\mathcal{A}_s^I)=\Big(\frac {k\pi}2-\frac {(1-s)\pi}4\Big)^{2s}+O\Big(\frac 1k\Big),\qquad k\to\infty
$$
(recall that $\lambda_k(-\Delta_I)\equiv \big(\frac {k\pi}2\big)^2$).
\end{remark}

In this section we relate the spectra of $\mathcal{A}_s^Q$ and $\mathcal{A}_s^\omega$.

\begin{theorem}
\label{T:tube}
The spectrum of $\mathcal{A}_s^Q$ coincides with the ray 
\begin{equation}\label{ess}
\sigma(\mathcal{A}_{s}^Q)=\sigma_{ess}(\mathcal{A}_{s}^Q)=[\Lambda_s, +\infty),
\end{equation}
where $\Lambda_s$ is the smallest eigenvalue of $\mathcal{A}_s^\omega$.
\end{theorem}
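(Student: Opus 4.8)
The plan is to establish the two halves of \eqref{ess} separately: first the inclusion $\sigma(\mathcal{A}_s^Q)\subseteq[\Lambda_s,+\infty)$ via a lower bound on the quadratic form, and then the reverse inclusion $[\Lambda_s,+\infty)\subseteq\sigma_{ess}(\mathcal{A}_s^Q)$ via a Weyl-type singular-sequence construction. Since the spectrum is automatically closed and, once the second inclusion is shown, contains the unbounded set $[\Lambda_s,+\infty)$, the coincidence $\sigma=\sigma_{ess}$ will follow.

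For the lower bound, I would pass to the Caffarelli--Silvestre picture from Section~\ref{s:prelim}: for $u\in\widetilde H^s(Q)$ we have $a_s^Q[u]=C_s\,\mathcal{E}_s^Q(U_s)$, where $U_s$ is the harmonic extension on $Q\times\mathbb{R}_+$ and $\mathcal{E}_s^Q$ the weighted Dirichlet integral. For each fixed $(z,y)$ one freezes the longitudinal variable and uses that $W(\cdot,z,y)\in\widetilde H^s(\omega)$ (as a function of $x'$), together with the variational characterization $\Lambda_s=\inf a_s^\omega[v]/\|v\|_{L_2(\omega)}^2$ over $\widetilde H^s(\omega)$, reinterpreted again through the extension as a statement about the weighted Dirichlet integral over $\omega\times\mathbb{R}_+$. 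Integrating the slice-wise inequality in $z$ and discarding the nonnegative contribution of the $\partial_z$-derivative yields $\mathcal{E}_s^Q(W)\ge \Lambda_s C_s^{-1}\|u\|_{L_2(Q)}^2$ for every admissible $W$, hence $a_s^Q[u]\ge\Lambda_s\|u\|_{L_2(Q)}^2$; this gives $\sigma(\mathcal{A}_s^Q)\subseteq[\Lambda_s,+\infty)$. One must be a little careful that slicing a global admissible extension produces, for a.e. $z$, an admissible extension of the corresponding slice of $u$ with finite energy; a Fubini argument handles this.

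For the reverse inclusion, fix $\mu=\Lambda_s+\kappa^2\ge\Lambda_s$ and build a singular Weyl sequence. The natural ansatz is $u_j(x',z)=\varphi_1(x')\,e^{i\kappa z}\,\chi_j(z)$, where $\varphi_1$ is the ground state of $\mathcal{A}_s^\omega$ and $\chi_j$ is a slowly varying cutoff (e.g. $\chi_j(z)=\chi(z/j)$) so that $\|u_j\|_{L_2(Q)}\to\infty$ while the ``error'' $(\mathcal{A}_s^Q-\mu)u_j$ has comparatively small norm. The subtlety — and this is where the nonlocality bites, so I expect it to be the main obstacle — is that $\mathcal{A}_s^Q$ does not factor as a tensor-product sum $\mathcal{A}_s^\omega\otimes I + I\otimes(\text{something in }z)$ the way $-\Delta$ does, so one cannot simply compute $\mathcal{A}_s^Q u_j$ by separation of variables. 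I would instead work with the Fourier transform in $z$ alone: writing $\mathcal{F}_1$ for the partial transform $z\mapsto\zeta$, the form becomes $a_s^Q[u]=\int_{\mathbb{R}}\big\langle(\mathcal{A}_s^\omega+\text{l.o.t.}(\zeta))\,\widehat u(\cdot,\zeta),\widehat u(\cdot,\zeta)\big\rangle\,d\zeta$ only heuristically; more honestly, $|\xi|^{2s}=(|\xi'|^2+\zeta^2)^s$ does not split, so I would estimate directly. Concretely, one shows $\|(\mathcal{A}_s^Q-\mu)u_j\|_{L_2}=o(\|u_j\|_{L_2})$ by exploiting that $\widehat u_j(\cdot,\zeta)=\varphi_1\otimes(\widehat{e^{i\kappa\cdot}\chi_j})(\zeta)$ concentrates in $\zeta$ near $\kappa$ as $j\to\infty$, that $(|\xi'|^2+\zeta^2)^s$ is smooth and equals $|\xi'|^2+\kappa^2$-governed near that concentration locus only up to controllable errors, and that $\mathcal{A}_s^\omega\varphi_1=\Lambda_s\varphi_1$. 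The required smoothness/Lipschitz bounds on $\zeta\mapsto(|\xi'|^2+\zeta^2)^s$ near $\zeta=\kappa$, uniform for $|\xi'|$ in the relevant range, together with standard facts that the $L_2$-mass of $\widehat{e^{i\kappa\cdot}\chi_j}$ escapes to the point $\kappa$, close the estimate. Having produced, for every $\mu\ge\Lambda_s$, a singular sequence, Weyl's criterion gives $\mu\in\sigma_{ess}(\mathcal{A}_s^Q)$, completing the proof.
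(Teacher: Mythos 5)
Your lower bound is exactly the paper's argument: slice the Caffarelli--Silvestre extension at fixed $z$, note that the slice is an admissible extension of $u_z$, drop the $\partial_z$-energy, and use the variational characterization of $\Lambda_s$ on $\omega$. That half is fine, and you correctly flag the Fubini point.

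The reverse inclusion has a genuine gap, and you put your finger on it without resolving it. Your ansatz $u_j=\varphi_1(x')e^{i\kappa z}\chi_j(z)$ concentrates $\widehat u_j$ at $\zeta=\kappa$, and in that limit
\[
\frac{\|(\mathcal{A}_s^Q-\mu)u_j\|^2}{\|u_j\|^2}\ \longrightarrow\ \int_{\mathbb{R}^{n-1}}\bigl((|\xi'|^2+\kappa^2)^s-\mu\bigr)^2\,|\mathcal{F}_{n-1}\varphi_1(\xi')|^2\,d\xi'.
\]
This does not vanish for any choice of $\mu$, because the multiplier $(|\xi'|^2+\kappa^2)^s$ is not constant on $\mathop{\rm supp}\mathcal{F}_{n-1}\varphi_1=\mathbb{R}^{n-1}$. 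The relation $\mathcal{A}_s^\omega\varphi_1=\Lambda_s\varphi_1$ only says $|\xi'|^{2s}\mathcal{F}_{n-1}\varphi_1$ pairs like $\Lambda_s\mathcal{F}_{n-1}\varphi_1$ against test functions supported in $\overline\omega$; it gives no control over the different multiplier $(|\xi'|^2+\kappa^2)^s$. (It \emph{would} help for the spectral power $(-\Delta_\omega)^s$, whose eigenfunctions are shared with $(-\Delta_\omega+\kappa^2)^s$, but not for the restricted operator $\mathcal{A}_s^\omega$.) So the ``controllable errors'' you invoke are not small. The paper's fix, which your sketch is missing, is to introduce the restricted fractional Helmholtz operator $\mathcal{A}_{s,\kappa}^\omega=(-\Delta_\omega+\kappa^2)^s$ on the cross-section, take its (normalized, positive) ground state $\varphi_{s,\kappa}$ and eigenvalue $\Lambda_{s,\kappa}$, and use the ansatz $v_m=\varphi_{s,\kappa}(x')e^{i\kappa z}\chi_m(z)$. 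Then the limiting error is governed by $(|\xi'|^2+\zeta^2)^s-(|\xi'|^2+\kappa^2)^s$, which is small near $\zeta=\kappa$ uniformly in $\xi'$, and the Weyl value is $\Lambda_{s,\kappa}$, not $\Lambda_s+\kappa^2$. One then needs the additional (easy but necessary) observation that $\kappa\mapsto\Lambda_{s,\kappa}$ is continuous, increasing, equal to $\Lambda_s$ at $\kappa=0$, and tends to $+\infty$, so that $\{\Lambda_{s,\kappa}\colon\kappa\ge0\}=[\Lambda_s,+\infty)$. Without this family of cross-sectional operators your parametrization $\mu=\Lambda_s+\kappa^2$ is also off target.
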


\begin{proof}
First of all, we recall that, for any semi-bounded self-adjoint operator, the minimum of its spectrum coincides with the minimum of the corresponding Rayleigh quotient.
In particular,
$$
\inf_{v\in \widetilde{H}^s(\omega)} \frac{a^\omega_s[v]}{\|v;L_2(\omega)\|^2}=\Lambda_s\,.
$$

For any $u\in \widetilde{H}^s(Q)$ and $z\in \mathbb{R}$ let us define $u_z\in \widetilde{H}^s(\omega)$ by the formula
$$
u_z(x')=u(x',z),
$$
and denote $U_{z,s}$ its Caffarelli--Silvestre extension. Then we have
\begin{multline*}
a^Q_s[u]=C(s) \int\limits_0^\infty \int\limits_{\mathbb{R}^{n-1}} \int\limits_\mathbb{R} y^{1-2s}\big(|\nabla' U_{s}(x',z, y)|^2+|\partial_z U_{s}(x',z,y)|^2\big) \,dzdx'dy
\\
\geq
C(s) \int\limits_\mathbb{R} \int\limits_0^\infty \int\limits_{\mathbb{R}^{n-1}}  y^{1-2s}|\nabla' U_{z,s}(x',y)|^2 \,dx'dydz 
\\
\geq \int\limits_\mathbb{R} \Lambda_s \|u_z;L_2(\omega)\|^2 \,dz = \Lambda_s\|u;L_2(Q)\|^2,
\end{multline*}
where $\nabla'$ is the gradient with respect to $(x',y)$.
Thus, 
$$
\inf \sigma(\mathcal{A}_s^Q)=\inf_{u\in \widetilde{H}^s(Q)} \frac{a^Q_s[u]}{\|u;L_2(Q)\|^2}\geq \Lambda_s.
$$

To prove \eqref{ess}, we introduce the Dirichlet fractional Helmholtz operator in a domain $\Omega\subset \mathbb{R}^n$
$$
\mathcal{A}_{s,\kappa}^\Omega=(-\Delta_\Omega+\kappa^2)^s.
$$
It is defined by its quadratic form
$$
a_{s,\kappa}^\Omega[u]=(\mathcal{A}_{s,\kappa}^\Omega u,u):=\int\limits_{\mathbb{R}^n}(|\xi|^2+\kappa^2)^{s} |\mathcal{F}_n u(\xi)|^2\,d\xi, \quad u\in \widetilde{H}^s(\Omega).
$$
In the case of a bounded domain $\omega$ we denote by $\Lambda_{s,\kappa}$ the first eigenvalue of the operator $\mathcal{A}_{s,\kappa}^\omega$ and by $\varphi_{s,\kappa}$ the corresponding eigenfunction, which can be chosen positive and normalized in $L_2(\omega)$. 

Obviously, for any $\Omega\subset\mathbb{R}^n$
$$
a_{s,\kappa_1}^\Omega \le a_{s,\kappa_2}^\Omega \quad \mbox{for} \quad \kappa_1\le \kappa_2.
$$
Therefore, the function $f(\kappa)=\Lambda_{s,\kappa}$ is increasing. Moreover, it is continuous, $f(0)=\Lambda_s$, and $f(\kappa)\to+\infty$ as $\kappa\to+\infty$, so 
$$
\{\Lambda_{s,\kappa} \colon \kappa\in [0,+\infty)\}=[\Lambda_s,+\infty).
$$

First, we give an informal explanation of \eqref{ess}.
We claim that the function $\varphi_{s,\kappa}(x')e^{i\kappa z}$ is an ``eigenfunction of continuous spectrum'' for $\mathcal{A}_{s}^Q$ corresponding to the ``eigenvalue'' $\Lambda_{s,\kappa}$. Indeed, we have
$$
\mathcal{F}_n\big[\varphi_{s,\kappa}(x')e^{i\kappa z}\big](\xi)=\mathcal{F}_{n-1}[\varphi_{s,\kappa}](\xi')\delta(\zeta-\kappa),
$$
where $\xi=(\xi',\zeta)$ is the dual variable to $x=(x',z)$, and thus
\begin{multline*}
\mathcal{F}_n\big[\mathcal{A}_{s}^Q \varphi_{s,\kappa}(x')e^{i\kappa z}\big](\xi)=|\xi|^{2s}\mathcal{F}_{n-1}[\varphi_{s,\kappa}](\xi')\delta(\zeta-\kappa)\\
=(|\xi'|^2+\kappa^2)^s\mathcal{F}_{n-1}[\varphi_{s,\kappa}](\xi')\delta(\zeta-\kappa)=\mathcal{F}_{n-1}\big[\mathcal{A}_{s,\kappa}^\omega \varphi_{s,\kappa}\big](\xi')\delta(\zeta-\kappa)\\
=\Lambda_{s,\kappa}\mathcal{F}_{n-1}[\varphi_{s,\kappa}](\xi')\delta(\zeta-\kappa)=\Lambda_{s,\kappa}\mathcal{F}_n\big[\varphi_{s,\kappa}(x')e^{i\kappa z}\big](\xi',\zeta),
\end{multline*}
and the claim follows.

To be more formal, we construct for any $\Lambda_{s,\kappa}$ with $\kappa\geq 0$ a Weyl sequence for the operator $\mathcal{A}_s^Q$. We put
$$
v_m(x)=\varphi_{s,\kappa}(x') \chi_m(z),\quad m\in\mathbb{N},
$$
where
$$
\chi_m(z)=e^{i\kappa z} \chi\left( \frac{z -2m^2}{m}\right),
$$
and $\chi$ is a smooth cutoff function such that  $\chi(z)=1$ for $|z|\le 1$ and $\chi(z)=0$ for $|z|\geq 2$.
One can easily check that $\chi_{m_1}(z)\chi_{m_2}(z)\equiv 0$ if $m_1\ne m_2$, so it is enough to prove that 
$$
\frac{\|\mathcal{A}_s^Q v_m-\Lambda_{s,\kappa} v_m;L_2(Q)\|}{\|v_m;L_2(Q)\|}\to 0 \quad \mbox{as} \quad m\to+\infty.
$$

We have
\begin{multline*}
\mathcal{F}_n\big[\mathcal{A}_s^Q v_m-\Lambda_{s,\kappa} v_m\big](\xi)\\
=\mathcal{F}_n\big[\mathcal{A}_s^Q v_m\big](\xi)- \mathcal{F}_{n-1}\big[\mathcal{A}_s^\omega \varphi_{s,\kappa}\big] (\xi')\mathcal{F}_1[\chi_m] (\zeta)\\
=\big(|\xi|^{2s}-(|\xi'|^2+\kappa^2)^s\big)\, \mathcal{F}_{n-1}[\varphi_{s,\kappa}] (\xi')\mathcal{F}_1[\chi_m] (\zeta),
\end{multline*}
so, by the Parseval theorem we obtain that
\begin{multline*}
\|\mathcal{A}_s^Q v_m-\Lambda_{s,\kappa} v_m;L_2(Q)\|^2 \\
=
\int\limits_{\mathbb{R}^n}\big((|\xi'|^2+|\zeta|^2)^{s}-(|\xi'|^2+\kappa^2)^{s}\big)^2\, |\mathcal{F}_{n-1} [\varphi_{s,\kappa}](\xi')|^2 |\mathcal{F}_1 [\chi_m](\zeta)|^2\, d\xi'd\zeta.
\end{multline*}
We use the relation $|\mathcal{F}_1 [\chi_m](\zeta)|=m\,|\mathcal{F}_1 [\chi](m(\zeta-\kappa))|$, change the variable and arrive at
\begin{multline*}
\|\mathcal{A}_s^Q v_m-\Lambda_{s,\kappa} v_m;L_2(Q)\|^2\\
=m\int\limits_{\mathbb{R}^n}\big((|\xi'|^2+\big|\frac{\tau}{m}+\kappa\big|^2)^{s}-(|\xi'|^2+\kappa^2)^{s}\big)^2\, |\mathcal{F}_{n-1} [\varphi_{s,\kappa}](\xi')|^2 |\mathcal{F}_1 [\chi](\tau)|^2 \, d\xi' d\tau \\
\le m\int\limits_{\mathbb{R}^n}\big(\big|\frac{\tau}{m}+\kappa\big|^{2s}-\kappa^{2s}\big)^2 \, |\mathcal{F}_{n-1} [\varphi_{s,\kappa}](\xi')|^2 |\mathcal{F}_1 [\chi](\tau)|^2 \, d\xi'd\tau\\
\le m\int\limits_{\mathbb{R}}\Big( \big(\frac{\tau}{m}\big)^2+2\kappa|\frac{\tau}{m}\big|\Big)^{2s} |\mathcal{F}_1 \chi(\tau)|^2\,d\tau \int\limits_{\mathbb{R}^{n-1}} |\mathcal{F}_{n-1} \varphi_{s,\kappa}(\xi')|^2 \, d\xi' \le\frac{C_1(s,\kappa)}{m^{2s-1}}.
\end{multline*}
Since $\varphi_{s,\kappa}$ are normalized, we get
$$
\|v_m;L_2(Q)\|^2= \int\limits_\mathbb{R} \left|\chi\left(\frac{z-2m^2}{m}\right)\right|^2\, dz = m\|\chi; L_2(\mathbb{R})\|^2=C_2 m,
$$
and finally
$$
\frac{\|\mathcal{A}_s^Q v_m-\Lambda_{s,\kappa} v_m;L_2(Q)\|}{\|v_m;L_2(Q)\|}\le \frac{C(s,\kappa)}{m^s}\to 0, \quad m\to+\infty,
$$
as desired.
\end{proof}

\begin{corollary}\label{cor1}
Let ${\cal Q} =\omega \times \mathbb{R}_+$ be a semi-tube. Then the spectrum of $\mathcal{A}_s^{\cal Q}$ coincides with the ray $[\Lambda_s, +\infty)$, where $\Lambda_s$ is the smallest eigenvalue of $\mathcal{A}_s^\omega$.
\end{corollary}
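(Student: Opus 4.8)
The plan is to deduce the corollary from Theorem~\ref{T:tube} by two elementary observations: a semi-tube is a subdomain of the corresponding tube, and the Weyl sequences constructed in the proof of Theorem~\ref{T:tube} can be shifted so far along the axis that they become supported inside the semi-tube.

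\emph{Lower bound.} Since $\overline{\cal Q}=\overline\omega\times[0,+\infty)\subset\overline\omega\times\mathbb R=\overline Q$, we have $\widetilde H^s({\cal Q})\subset\widetilde H^s(Q)$. The quadratic form $a_s[u]=\int_{\mathbb R^n}|\xi|^{2s}|\mathcal F_n u(\xi)|^2\,d\xi$ does not depend on the ambient domain, and for $u\in\widetilde H^s({\cal Q})$ one has $\|u;L_2({\cal Q})\|=\|u;L_2(Q)\|$ because $\mathop{\rm supp}u\subset\overline{\cal Q}$. Hence, minimising the Rayleigh quotient over the smaller set $\widetilde H^s({\cal Q})$ and invoking the first half of the proof of Theorem~\ref{T:tube},
$$
\inf\sigma(\mathcal A_s^{\cal Q})=\inf_{u\in\widetilde H^s({\cal Q})}\frac{a_s^{\cal Q}[u]}{\|u;L_2({\cal Q})\|^2}\geq\inf_{u\in\widetilde H^s(Q)}\frac{a_s^{Q}[u]}{\|u;L_2(Q)\|^2}\geq\Lambda_s,
$$
so that $\sigma(\mathcal A_s^{\cal Q})\subset[\Lambda_s,+\infty)$.

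\emph{Upper bound.} Fix $\kappa\geq 0$ and let $v_m(x)=\varphi_{s,\kappa}(x')\chi_m(z)$ with $\chi_m(z)=e^{i\kappa z}\chi((z-2m^2)/m)$ be exactly the functions from the proof of Theorem~\ref{T:tube}. The support of $\chi_m$ lies in $[\,2m^2-2m,\,2m^2+2m\,]\subset\mathbb R_+$ for $m\geq 2$, so $v_m\in\widetilde H^s({\cal Q})$ for all large $m$, and the $v_m$ still have pairwise disjoint supports, hence after normalisation converge weakly to zero. Because $v_m$ is smooth, compactly supported and supported in $\overline{\cal Q}$, the restricted fractional Laplacian acts on it through the full one, $\mathcal A_s^{\cal Q}v_m=\big((-\Delta)^s v_m\big)\big|_{\cal Q}=\big(\mathcal A_s^Q v_m\big)\big|_{\cal Q}$, whence
$$
\|\mathcal A_s^{\cal Q}v_m-\Lambda_{s,\kappa}v_m;L_2({\cal Q})\|\leq\|\mathcal A_s^{Q}v_m-\Lambda_{s,\kappa}v_m;L_2(Q)\|,
$$
and the estimate already obtained in the proof of Theorem~\ref{T:tube} yields $\|\mathcal A_s^{\cal Q}v_m-\Lambda_{s,\kappa}v_m\|/\|v_m\|\leq C(s,\kappa)m^{-s}\to 0$. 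Thus $\{v_m\}$ is a singular Weyl sequence and $\Lambda_{s,\kappa}\in\sigma_{ess}(\mathcal A_s^{\cal Q})$. Letting $\kappa$ range over $[0,+\infty)$, recalling $\{\Lambda_{s,\kappa}:\kappa\geq 0\}=[\Lambda_s,+\infty)$ and using that the spectrum is closed, we obtain $[\Lambda_s,+\infty)\subset\sigma_{ess}(\mathcal A_s^{\cal Q})\subset\sigma(\mathcal A_s^{\cal Q})$. Combined with the lower bound this gives $\sigma(\mathcal A_s^{\cal Q})=\sigma_{ess}(\mathcal A_s^{\cal Q})=[\Lambda_s,+\infty)$.

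The only point that deserves care — and the place where non-locality could in principle interfere — is the identity $\mathcal A_s^{\cal Q}v_m=\big((-\Delta)^s v_m\big)\big|_{\cal Q}$: it holds because $v_m$ is supported strictly inside ${\cal Q}$, so $a_s(v_m,v)=\langle(-\Delta)^s v_m,v\rangle_{L_2(\mathbb R^n)}=\langle(-\Delta)^s v_m,v\rangle_{L_2({\cal Q})}$ for every $v\in\widetilde H^s({\cal Q})$, and $(-\Delta)^s v_m\in L_2(\mathbb R^n)$ since $\mathcal F_n v_m$ is rapidly decaying. As a result, the quantity controlling the Weyl condition in the semi-tube is the $L_2$-norm of $(-\Delta)^s v_m-\Lambda_{s,\kappa}v_m$ over a subdomain of $\mathbb R^n$, dominated by its norm over all of $\mathbb R^n$, which is precisely what was estimated for the tube; everything else is a verbatim repetition of the computations in the proof of Theorem~\ref{T:tube}, the only extra bookkeeping being that the cut-off centre $2m^2$ grows fast enough for the supports to enter $\mathbb R_+$.
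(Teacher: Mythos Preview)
Your proof is correct and follows exactly the route the paper takes: domain monotonicity (via the inclusion $\widetilde H^s({\cal Q})\subset\widetilde H^s(Q)$) for the lower bound, and the observation that the same Weyl sequence $v_m$ is supported in the semi-tube for the upper bound. The paper states this in two lines; you have simply written out the details, including the justification that $\mathcal A_s^{\cal Q}v_m$ agrees with the restriction of $(-\Delta)^s v_m$, which is implicit in the paper's use of the same sequence.
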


Indeed, Theorem \ref{T:tube} and monotonicity of the spectra on domain imply the relation $\sigma(\mathcal{A}_{s}^{\cal Q})\subset[\Lambda_s, +\infty)$, whereas the relation $[\Lambda_s, +\infty)\subset\sigma_{ess}(\mathcal{A}_{s}^{\cal Q})$ holds due to the same Weyl sequence.

\section{Problem in a perturbed multi-tube}
\label{s:mult}

Let $\Omega$ be a {\it perturbed multi-tube}, that is, outside of some compact set $\mathcal{K}$, $\Omega$ coincides with a finite union of non-intersecting semi-tubes ${\cal Q}_j$, $j=1,\dots,N$. We assume that 
\begin{itemize}
    \item all ${\cal Q}_j$ are congruent to ${\cal Q}=\omega \times \mathbb{R}_+$ (recall that $\omega$ is connected);
    \item the axes of ${\cal Q}_j$, $j=1,\dots,N$, are not co-directional, see Fig. \ref{fig:01}.
\end{itemize}

\begin{figure}[ht]
    \centering
    \includegraphics[width=0.6\textwidth]{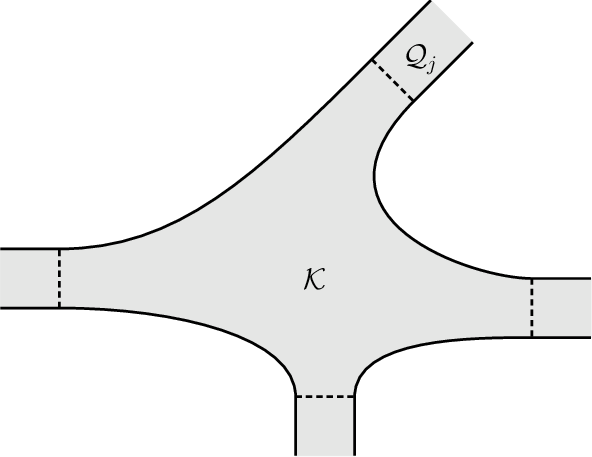}
    \caption{Perturbed multi-tube}
    \label{fig:01}
\end{figure}

\begin{remark}
We stress that the latter assumption is not needed in the local case $s=1$.
\end{remark}

First, we prove an auxiliary statement.

\begin{theorem}
\label{T:eps}
For any $R>0$ and for arbitrary $u\in \widetilde{H}^s(\Omega)$, the following inequality holds:
\begin{equation}
\label{almost}
a^{\Omega}_s[u]\ge \big(\Lambda_s-CR^{-2s}\big) \,\|u; L_2(\Omega)\|^2 - C \|u; L_2(
\mathbb{B}_{R})\|^2,
\end{equation}
where $\mathbb{B}_{R}=\{x\in \mathbb{R}^n \colon |x|<R\}$ is the ball, and $C$ does not depend on $u$ and $R$.
\end{theorem}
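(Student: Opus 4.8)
The plan is to exploit the Caffarelli–Silvestre extension together with a suitable partition of $\mathbb{R}^n$ that separates the "compact core" from the cylindrical outlets, and then apply the threshold estimate $\Lambda_s$ on each semi-tube. Concretely, I would fix $u \in \widetilde H^s(\Omega)$ with Caffarelli–Silvestre extension $U = U_s$, so that $a_s^\Omega[u] = C_s \mathcal{E}_s^\Omega(U) = C_s \int_0^\infty\!\int_{\mathbb{R}^n} y^{1-2s}|\nabla U|^2\,dx\,dy$. The domain $\Omega$ splits, up to the compact set $\mathcal{K} \subset \mathbb{B}_{R_0}$ for some fixed $R_0$, into the semi-tubes ${\cal Q}_j = \omega_j \times \mathbb{R}_+$ (in the local coordinates adapted to the $j$-th outlet). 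On each ${\cal Q}_j$, writing $z \geq 0$ for the axial variable, the argument of Theorem \ref{T:tube} applies verbatim: slicing at fixed $z$ and using $\inf_{v\in\widetilde H^s(\omega)} a_s^\omega[v]/\|v;L_2(\omega)\|^2 = \Lambda_s$ gives

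\begin{equation*}
C_s \int_0^\infty\!\!\int_{\mathbb{R}^{n-1}}\!\!\int_0^\infty y^{1-2s}\big(|\nabla' U|^2 + |\partial_z U|^2\big)\,dz\,dx'\,dy \;\geq\; \Lambda_s \int_0^\infty \|u(\cdot,z);L_2(\omega)\|^2\,dz.
\end{equation*}

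The subtlety is that this clean slicing works only on the part of each outlet beyond the core; near $z = 0$ (equivalently, inside $\mathbb{B}_{R_0}$) the cross-sections are not exactly $\omega$ and the pieces glue together through $\mathcal{K}$.

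To handle this I would introduce a smooth cutoff $\eta_R$ with $\eta_R \equiv 1$ on the outlets for $z \geq 2R$ (in each local coordinate) and $\eta_R \equiv 0$ on $\mathbb{B}_R$, with $|\nabla \eta_R| \leq C/R$, and split $U = \eta_R U + (1-\eta_R)U$. Applying the IMS-type localization formula for the weighted Dirichlet form,
$$
\mathcal{E}_s^\Omega(U) \;=\; \mathcal{E}_s^\Omega(\eta_R U) + \mathcal{E}_s^\Omega((1-\eta_R)U) - \int_0^\infty\!\!\int_{\mathbb{R}^n} y^{1-2s}\,\big(|\nabla\eta_R|^2 + |\nabla(1-\eta_R)|^2\big)\,|U|^2,
$$
I would estimate the localization error term. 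On its support $|\nabla\eta_R|^2 \leq C R^{-2}$; the difficulty is that this is a bulk term in the extension, not just in $u$. However, the support of $\nabla\eta_R$ lies in the region where (in the outlets) $z \in (R, 2R)$ and $y$ unrestricted — for that part one controls $\int y^{1-2s}|U|^2$ over a slab of axial width $\sim R$ by $C R^{1-2s} \int y^{1-2s}|U|^2/(\text{scale})$... this is where I expect the main obstacle, and the correct fix is to use the trace/energy bound $\int_0^\infty\!\int y^{1-2s}|U|^2\,dx\,dy \lesssim$ (in a slab) $R^{2s}\big(\mathcal{E}_s(U;\text{slab}) + R^{-2s}\|u;\text{slab}\|^2\big)$, a Caffarelli–Silvestre Poincaré-type estimate localized in the outlet direction, yielding the scaling $R^{-2s}$ with a constant that can be absorbed or bounded. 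On $\mathbb{B}_R$ the cutoff $\nabla(1-\eta_R)$ contributes the term $C\|u;L_2(\mathbb{B}_R)\|^2$ (here we may crudely bound $\mathcal{E}_s^\Omega((1-\eta_R)U)$ from below by $0$ and absorb the whole interior piece into the $-C\|u;L_2(\mathbb{B}_R)\|^2$ slack, since $(1-\eta_R)U$ is supported over $\mathbb{B}_{2R}$ but its trace is $(1-\eta_R)u$; the mismatch between $\mathbb{B}_R$ and $\mathbb{B}_{2R}$ is cosmetic and absorbed by enlarging $C$).

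Having localized, the far piece $\eta_R U$ is supported (at $y=0$) in the union of outlet tails $\bigcup_j \{z \geq R\}$, which are disjoint genuine semi-tubes, so the slicing bound above gives $C_s\mathcal{E}_s^\Omega(\eta_R U) \geq \Lambda_s \|\eta_R u; L_2(\Omega)\|^2 \geq \Lambda_s(\|u;L_2(\Omega)\|^2 - \|u;L_2(\mathbb{B}_{2R})\|^2)$. Collecting terms, $a_s^\Omega[u] = C_s\mathcal{E}_s^\Omega(U) \geq \Lambda_s\|u;L_2(\Omega)\|^2 - \Lambda_s\|u;L_2(\mathbb{B}_{2R})\|^2 - CR^{-2s}\|u;L_2(\Omega)\|^2 - C\|u;L_2(\mathbb{B}_R)\|^2$, which is \eqref{almost} after renaming $R$ and $C$. \textbf{I expect the main obstacle} to be the honest proof of the localized Poincaré-type inequality for the weighted extension that converts $\int y^{1-2s}|U|^2$ over an axial slab of width $\sim R$ into $R^{-2s}$ times energy plus a controlled $L_2$ term of $u$ — this is the analytic heart, and it is plausibly where the non-locality forces the $R^{-2s}$ (rather than $R^{-2}$) decay rate; the partition-of-unity bookkeeping and the passage from $\mathbb{B}_R$ to $\mathbb{B}_{2R}$ are routine. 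Note the assumption that the outlet axes are not co-directional is \emph{not} used in this lemma (it enters later, presumably to prevent a Weyl sequence from "leaking" coherently across two parallel outlets); here a single $R$-neighborhood of $\mathcal{K}$ suffices regardless of geometry.
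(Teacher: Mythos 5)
Your proposal differs from the paper's proof at two structural points, and both are genuine gaps.

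First, your closing remark that the hypothesis ``the outlet axes are not co-directional'' is not used in this lemma is incorrect; it is used here, and essentially. After a radial cutoff at a \emph{fixed} radius $r_0$ (not at $R$), the paper introduces an angular partition of unity $\wp_j$ adapted to \emph{pairwise disjoint} cones $\mathcal{C}_j$ with common vertex at the origin that cover the truncated semi-tubes $\mathcal{Q}_j\setminus\mathbb{B}_{r_0}$; such disjoint cones exist precisely because no two axes are co-directional. This confines each localized piece $u\rho_2\wp_j$ to a single semi-tube $\mathcal{Q}_j$, and only then can Corollary~\ref{cor1} be applied to get the constant $\Lambda_s$. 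In your scheme the far piece $\eta_R u$ lives on the \emph{union} of all outlet tails, and the slicing bound from Theorem~\ref{T:tube} does not then produce $\Lambda_s$: a hyperplane transversal to one axis at height $z\sim R$ may meet several outlets, and $\lambda_1$ of a union of cross-sections is strictly below $\Lambda_s$ by domain monotonicity. In fact, if two outlets \emph{were} co-directional, the essential spectrum threshold would drop below $\Lambda_s$ and \eqref{almost} would be false for that $\Omega$ (plug a Weyl sequence escaping to infinity along the two parallel tails into \eqref{almost} and let $R\to\infty$), so the geometric hypothesis must be consumed somewhere in this proof — and it is consumed here.

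Second, placing the cutoff at radius $\sim R$ forces you to control a bulk term $\int y^{1-2s}|U|^2$ over a slab of width $\sim R$, and the ``localized Poincaré inequality'' you invoke to do so is exactly what you flag as the obstacle; it is indeed where this route stalls. The paper sidesteps the term entirely. Its radial cutoff $(\rho_1,\rho_2)$ sits at fixed $r_0$, so the localization error is supported in a fixed compact annulus in $(x,y)$. The parameter $R$ enters through a different device: the extension is split as $U=U_1+U_2$, where $U_1$ (resp.\ $U_2$) is the Caffarelli--Silvestre extension of $u\chi_{\mathbb{B}_R}$ (resp.\ $u\chi_{\mathbb{R}^n\setminus\mathbb{B}_R}$). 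On the fixed annulus, the $U_1$ contribution is bounded via the unit $L_1$-norm of the Poisson kernel and yields $C\|u;L_2(\mathbb{B}_R)\|^2$, while the $U_2$ contribution is bounded via the pointwise decay of $\mathcal{P}_s$ (using $|x-\widetilde x|\geq|\widetilde x|/2$ when $|x|\leq r_0+2$, $|\widetilde x|\geq R$) and yields $C\,R^{2-n-2s}\|u;L_2(\Omega)\|^2$. That kernel decay — not any slab Poincaré inequality — is the true source of the $R^{-2s}$. The remaining angular-cutoff error $\int y^{1-2s}V^2/(|x|^2+y^2)$ is then controlled by a fractional Hardy-type bound (Lemma~2.1 of \cite{MN20a}), giving $\||x|^{-s}u\rho_2;L_2\|^2\leq R^{-2s}\|u;L_2(\Omega)\|^2+C\|u;L_2(\mathbb{B}_R)\|^2$. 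Your proposal has no analogue of the $U_1/U_2$ split, no angular localization, and the Poincaré step it relies on is unproven; as written, it does not close.
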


\begin{proof}
Here we partly follow the line of the proof of \cite[Lemma 1]{BaNa20} (see also \cite{K}) but essentially modify it for the nonlocal case, cf. \cite[Lemma 3.1]{NS}.
For the sake of brevity we denote by 
$U(x,y)=U_{s}(x,y)$
the Caffarelli--Silvestre extension of $u$. 

We choose $r_0>0$ such that the ball $\mathbb{B}_{r_0}$ contains the compact set $\mathcal{K}$, and the truncated cylinders $\mathcal{Q}_j\setminus \mathbb{B}_{r_0}$ can be covered by disjoint conical domains ${\cal C}_j$, $j=1,\dots,N$, with the common vertex at the origin. Without loss of generality we assume that $R>2(r_0+2)$.

Let $\rho_1$ and $\rho_2$ be smooth cutoff functions of $r=\sqrt{|x|^2+y^2}$ such that 
$$
\rho_1(r)=0 \quad \text{for} \quad r>r_0+2, \qquad \rho_2(r)=0 \quad \text{for} \quad r<r_0+1, \qquad \rho_1^2+\rho_2^2=1.
$$ 
Then we have
$$
|\nabla U|^2=\sum\limits_{k=1,2}\Big(|\nabla \big(U\rho_k\big)|^2-2U\nabla U\cdot \rho_k\nabla\rho_k-U^2|\nabla\rho_k|^2\Big). 
$$
Since $2 U \nabla U=\nabla (U^2)$, integration by parts gives
\begin{multline}
\label{ident}
\int\limits_0^\infty \int\limits_{\mathbb{R}^n} y^{1-2s} |\nabla U(x,y)|^2 \,dxdy=\sum\limits_{k=1,2}\Big(\int\limits_0^\infty \int\limits_{\mathbb{R}^n} y^{1-2s} |\nabla \big(U \rho_k\big)|^2 \,dxdy\\
+\int\limits_0^\infty \int\limits_{\mathbb{R}^n} U^2\rho_k \,{\rm div} (y^{1-2s}\nabla \rho_k)\,dxdy-\int\limits_{\mathbb{R}^n}U^2 y^{1-2s}\rho_k\partial_y\rho_k \,dx \Big|_{y=0}\Big)\\=:\sum_{k=1,2}(I_{k1}+I_{k2}-I_{k3}).
\end{multline}
The surface integrals $I_{k3}$ ($k=1, 2$) disappear since $\rho_k$ depend only on $r$ and $\partial_y \rho_k (x,y) = O(y)$ as $y\to +0, x\in \mathbb{R}^n$. 

To estimate terms $I_{k2}$ we split the representation \eqref{CS-ext} as follows:
$$
U(x,y)=U_1(x,y)+U_2(x,y):=
\Big(\int\limits_{\mathbb{B}_{R}}+\int\limits_{\mathbb{R}^n\setminus \mathbb{B}_{R}}\!\Big) {\cal P}_s(x-\widetilde{x},y)u(\widetilde{x})\,d\widetilde{x}.
$$ 
and note that
\begin{multline*}
|\rho_k \,{\rm div} (y^{1-2s}\nabla \rho_k)|=|y^{1-2s}\rho_k  \Delta \rho_k + 
(1-2s) y^{-2s} \rho_k\partial_y\rho_k|\\ \le Cy^{1-2s}\chi_{[r_0+1,r_0+2]}(r), 
\end{multline*}
where $\chi_G$ stands for the characteristic function of the set $G$.
This gives
$$
|I_{k2}|\le C \int\limits_0^\infty \int\limits_{\mathbb{R}^n} y^{1-2s}(U_1^2(x,y)+U_2^2(x,y))\chi_{[r_0+1,r_0+2]}(r)\,dxdy=: J_1+J_2. 
$$
The estimate of $J_1$ follows from the fact that the Poisson kernel ${\cal P}_s(\cdot,y)$ has $L_1$-norm equal to
one, see, e.g., \cite{CaSi} or \cite{MN20}. So the Young inequality yields
\begin{equation*}
J_1\le C \int\limits_0^{r_0+2}y^{1-2s}\|U_1(\cdot,y);L_2(\mathbb{R}^n)\|^2\,dy \le C \int\limits_0^{r_0+2}y^{1-2s}\|u; L_2(\mathbb{B}_{R})\|^2\,dy.
\end{equation*}
To estimate $J_2$ we notice that the inequalities $|\widetilde{x}|\geq R$ and $|x|\le r_0+2$ imply $|x-\widetilde{x}|\geq|\widetilde{x}|/2$. Using the Bunyakovsky--Cauchy--Schwarz inequality we obtain 
\begin{multline*}
J_2\le C \int\limits_{0}^{\infty} \int\limits_{\mathbb{R}^n} y^{1-2s} \chi_{[r_0+1,r_0+2]}(r) \bigg(\int\limits_{\mathbb{R}^n\setminus \mathbb{B}_{R}} |u(\widetilde{x})|\,\frac{y^{2s}}{(|\widetilde{x}|^2/4+y^2)^{\frac n2+s}}\,d\widetilde{x}\bigg)^2 \,dxdy\\
\le C(r_0) \|u;L_2(\Omega)\|^2 \int\limits_{0}^{\infty}\int\limits_{\mathbb{R}^n\setminus\mathbb{B}_{R}} y^{1-2s}\frac{y^{4s}}{(|\widetilde{x}|^2/4+y^2)^{n+2s}}\,d\widetilde{x}\, dy\\
=C(r_0) \|u;L_2(\Omega)\|^2  \int\limits_{0}^{\infty} \tau^{1-2s}\frac{\tau^{4s}}{(\tau^2+1/4)^{n+2s}} \, d\tau \int\limits_{\mathbb{R}^n\setminus\mathbb{B}_{R}} |\widetilde{x}|^{2-2s-2n}\, d\widetilde{x}\\
\le C(r_0) \|u;L_2(\Omega)\|^2 R^{2-n-2s}. 
\end{multline*}
We substitute these estimates into \eqref{ident} and arrive at
\begin{multline}
\label{ineq1}
a_s^\Omega[u]= C(s)\int\limits_0^\infty \int\limits_{\mathbb{R}^n} y^{1-2s} |\nabla U|^2 \,dxdy
\geq C(s) \int\limits_0^\infty\int\limits_{\mathbb{R}^n} y^{1-2s}|\nabla(U \rho_2)|^2\,dxdy\\
- C(r_0)R^{2-n-2s}\|u;L_2(\Omega)\|^2 -C(r_0)\|u; L_2(\mathbb{B}_{R})\|^2.
\end{multline}

Denote by $V$ the Caffarelli--Silvestre extension of the function $u\rho_2$. Since $U \rho_2$ is an admissible extension of $u\rho_2$, we have
\begin{equation*}
\int\limits_0^\infty\int\limits_{\mathbb{R}^n} y^{1-2s}|\nabla (U \rho_2) |^2\,dxdy\geq \int\limits_0^\infty\int\limits_{\mathbb{R}^n} y^{1-2s}|\nabla V |^2\,dxdy.
\end{equation*}

Now we introduce a partition of unity on the unit sphere in $\mathbb{R}^n\times \mathbb{R}$, that is a set of smooth, non-negative, zero order positively homogeneous
functions $\wp_j(x,y)\equiv\wp_j(\frac{x}{r},\frac{y}{r})$, $j=1,\dots, N$,  such that
$$
\wp_j(x,y)=\wp_j(x,-y);\qquad
\wp_j(x,0)\equiv 1\quad\mbox{for}\quad x\in{\cal C}_j;\qquad \sum\limits_{j=1}^N\wp_j^2(x,y)\equiv1.
$$

Similarly to \eqref{ident} we derive
\begin{multline}
\label{ident1}
\int\limits_0^\infty\int\limits_{\mathbb{R}^n} y^{1-2s}|\nabla V|^2\,dxdy=\sum\limits_{j=1}^N
\Big(\int\limits_0^\infty \int\limits_{\mathbb{R}^n} y^{1-2s} |\nabla \big(V \wp_j\big)|^2 \,dxdy\\
+\int\limits_0^\infty \int\limits_{\mathbb{R}^n} V^2\wp_j \,{\rm div} (y^{1-2s}\nabla \wp_j)\,dxdy-\int\limits_{\mathbb{R}^n}V^2 y^{1-2s}\wp_j\partial_y\wp_j dx \Big|_{y=0}\Big).
\end{multline}
It is easy to see that
$$
|\nabla_x\wp_j|\le \frac Cr;\qquad
|\partial_y\wp_j|\le \frac {Cy}{r^2};\qquad| \Delta\wp_j|\le \frac {C}{r^2};
$$
on the other hand, we have
$$
V(x,y)=\int\limits_{\mathbb{R}^n\setminus\mathbb{B}_{r_0+1}}\!\! {\cal P}_s(x-\widetilde{x},y)(u\rho_2)(\widetilde{x})\,d\widetilde{x},
$$
that gives $V(x,y) = O(y^{2s})$ as $y\to +0$, $x\in \mathbb{B}_{\frac{r_0}2}$.

Therefore the last term in \eqref{ident1} vanishes, and we obtain
\begin{multline*}
\int\limits_0^\infty\int\limits_{\mathbb{R}^n} y^{1-2s}|\nabla V|^2\,dxdy\geq\sum\limits_{j=1}^N
\int\limits_0^\infty \int\limits_{\mathbb{R}^n} y^{1-2s} |\nabla (V\wp_j)|^2 \,dxdy\\
-C\int\limits_0^\infty \int\limits_{\mathbb{R}^n} V^2\,\frac {y^{1-2s}}{|x|^2+y^2}\,dxdy.
\end{multline*}

Since $V \wp_j$ is an admissible extension for the function $u\rho_2\wp_j$ supported in $\mathcal{C}_j$, Corollary \ref{cor1} gives
\begin{equation*}
C(s) \int\limits_0^\infty\int\limits_{\mathbb{R}^n} y^{1-2s}|\nabla (V\wp_j) |^2\,dxdy\geq a_s^{{\cal Q}_j}[u\rho_2\wp_j]
\geq \Lambda_s \|u\rho_2\wp_j; L_2(\mathcal{C}_j)\|^2,
\end{equation*}
whereas Lemma 2.1 in \cite{MN20a} provides the estimate
\begin{equation*}
\int\limits_0^\infty \int\limits_{\mathbb{R}^n} V^2 \frac{y^{1-2s}}{|x|^2+y^2} \,dxdy \le C\||x|^{-s}u\rho_2;L_2(\Omega)\|^2.
\end{equation*}
Substituting all estimates into \eqref{ineq1}, we arrive at
\begin{multline*}
a_s^\Omega[u]\geq  \Lambda_s \sum_{j=1}^N \|u\rho_2; L_2(\mathcal{C}_j)\|^2- C\||x|^{-s}u\rho_2;L_2(\Omega)\|^2\\ - C(r_0)R^{2-n-2s}\|u;L_2(\Omega)\|^2 -C(r_0)\|u; L_2(\mathbb{B}_{R})\|^2, 
\end{multline*}
and \eqref{almost} follows.
\end{proof}

\begin{theorem}
\label{T:multitube}
Under the above assumptions, the essential spectrum of $\mathcal{A}_s^{\Omega}$ coincides with the ray $[\Lambda_s, +\infty)$, where $\Lambda_s$ is the smallest eigenvalue of $\mathcal{A}_s^\omega$.
\end{theorem}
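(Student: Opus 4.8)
The plan is to establish the two inclusions $\sigma_{ess}(\mathcal{A}_s^\Omega)\supset[\Lambda_s,+\infty)$ and $\sigma_{ess}(\mathcal{A}_s^\Omega)\subset[\Lambda_s,+\infty)$ separately. The first inclusion is the easy half: since $\Omega$ contains (a translate of) a semi-tube $\mathcal{Q}_j$ congruent to $\mathcal{Q}=\omega\times\mathbb{R}_+$, one takes the same Weyl sequence $v_m$ constructed in the proof of Theorem \ref{T:tube} (supported in $\mathcal{Q}_j$, with the cutoff centers $2m^2$ escaping to infinity along the axis of $\mathcal{Q}_j$), extended by zero to $\Omega$. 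These functions lie in $\widetilde H^s(\Omega)$, their supports are eventually disjoint and escape every compact set, and the computation from Theorem \ref{T:tube} shows $\|\mathcal{A}_s^\Omega v_m-\Lambda_{s,\kappa}v_m;L_2(\Omega)\|/\|v_m;L_2(\Omega)\|\to 0$; here one must note that $\mathcal{A}_s^\Omega v_m$ differs from $\mathcal{A}_s^{\mathcal{Q}_j}v_m$ only through the nonlocal tail, which is negligible because the supports run to infinity. Hence every $\Lambda_{s,\kappa}$, $\kappa\ge 0$, and therefore the whole ray $[\Lambda_s,+\infty)$, belongs to $\sigma_{ess}(\mathcal{A}_s^\Omega)$.

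For the reverse inclusion I would use Theorem \ref{T:eps}. Fix $\lambda<\Lambda_s$ and choose $R$ so large that $\Lambda_s-CR^{-2s}>\lambda$. Then \eqref{almost} reads
$$
a_s^\Omega[u]-\lambda\|u;L_2(\Omega)\|^2\ge (\Lambda_s-CR^{-2s}-\lambda)\|u;L_2(\Omega)\|^2-C\|u;L_2(\mathbb{B}_R)\|^2.
$$
Because $\widetilde H^s(\Omega)$ is compactly embedded into $L_2(\Omega\cap\mathbb{B}_R)$ (the latter being a bounded region), the quadratic form $u\mapsto C\|u;L_2(\mathbb{B}_R)\|^2$ is form-compact relative to $a_s^\Omega$, and the displayed inequality shows that $\mathcal{A}_s^\Omega-\lambda$ is, up to a compact perturbation, bounded below by a strictly positive form. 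By the variational characterization of the bottom of the essential spectrum (or equivalently by Weyl's theorem on stability of essential spectrum under relatively compact perturbations, applied in the form-sense), this forces $\lambda\notin\sigma_{ess}(\mathcal{A}_s^\Omega)$. Since $\lambda<\Lambda_s$ was arbitrary, $\sigma_{ess}(\mathcal{A}_s^\Omega)\subset[\Lambda_s,+\infty)$.

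Combining the two inclusions gives $\sigma_{ess}(\mathcal{A}_s^{\Omega})=[\Lambda_s,+\infty)$, as claimed.

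The main obstacle is really packaged into the already-proven Theorem \ref{T:eps}: the delicate nonlocal analysis (splitting the Caffarelli--Silvestre extension into near and far parts, controlling the Poisson-kernel tails, and exploiting the angular partition of unity together with Corollary \ref{cor1} on each conical sector) is where the hypothesis that the semi-tube axes are not co-directional is used, since co-directional outlets could not be separated by disjoint cones. Given Theorem \ref{T:eps}, the remaining work in the present proof is essentially soft functional analysis: one only has to spell out the compactness of the embedding into $L_2(\Omega\cap\mathbb{B}_R)$ and the form-perturbation argument, plus the routine verification that the Weyl sequence of Theorem \ref{T:tube} still works on $\Omega$ with the nonlocal tail estimated away. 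I would expect the write-up to be short, citing \cite[\S 10.2]{BS} for the max-min characterization of $\inf\sigma_{ess}$ and Theorem \ref{T:tube} for the Weyl sequence.
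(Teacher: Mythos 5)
Your proposal is correct and follows essentially the same route as the paper: the inclusion $[\Lambda_s,+\infty)\subset\sigma_{ess}(\mathcal{A}_s^\Omega)$ via the Weyl sequence from Theorem~\ref{T:tube}, and the reverse inclusion via Theorem~\ref{T:eps} together with the compact embedding of $\widetilde H^s$ into $L_2(\mathbb{B}_R)$. The only cosmetic difference is that the paper runs the second half as a direct contradiction with a singular Weyl sequence $\{u_k\}$ and the Rellich theorem, whereas you invoke the abstract Weyl stability theorem for form-compact perturbations; these are equivalent packagings of the same argument.
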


\begin{proof}
The Weyl sequence constructed in the proof of Theorem \ref{T:tube} shows that $[\Lambda_s, +\infty)\subset\sigma_{ess}(\mathcal{A}_{s}^{\Omega})$.

To prove the opposite inclusion we need to check that if $\lambda= \Lambda_s-2\delta$ with some positive $\delta$ then $\lambda$ does not belong to the the essential spectrum of $\mathcal{A}_s^{\Omega}$. Assume the contrary and consider the corresponding Weyl sequence that is a sequence $\{u_k\}_{k=1}^{+\infty}\subset \widetilde{H}^s(\Omega)$ orthonormal in $L_2(\Omega)$ such that
\begin{equation}
\label{Th3-Weil}
 a^\Omega_s[u_k]\to \lambda \quad \mbox{as} \quad k\to+\infty.  
\end{equation}
However, choosing $R$ so large that $CR^{-2s}\le\delta$, we obtain by Theorem~\ref{T:eps} 
$$
a^\Omega_s[u_k]\geq (\Lambda_s -CR^{-2s}) - C\|u_k;L_2(
\mathbb{B}_R)\|^2\geq \lambda+\delta - C\|u_k;L_2(
\mathbb{B}_R)\|^2.
$$
By \eqref{Th3-Weil}, the sequence $\{u_k\}$ is bounded in $H^s(\mathbb{B}_R)$. By the Rellich Theorem, it is precompact in $L_2(\mathbb{B}_R)$. Since it is orthonormal, we obtain
$$
\|u_k;L_2(\mathbb{B}_R)\|\to 0\quad\Longrightarrow\quad \liminf a^\Omega_s[u_k]\geq \lambda+\delta,
$$
that contradicts \eqref{Th3-Weil}.
\end{proof}

\begin{remark}
If the cross-sections of the outlets to infinity differ, then a similar argument proves the relation $\sigma_{ess}(\mathcal{A}_s^{\Omega})=[\Lambda_s, +\infty)$, where $\Lambda_s$ is the minimal of the smallest eigenvalues for the Dirichlet fractional Laplacians on the cross-sections. 
\end{remark}

\section{Widening of the tube}\label{s:incr}

The simplest multi-tube is a locally expanded cylinder \eqref{cyl} (see Fig. \ref{fig:02}). Namely, we introduce the layer
$$
\Pi_\ell=\{x=(x',z)\subset \mathbb{R}^n \colon |z|<\ell\}, \quad \ell>0,
$$
and assume that a domain $Q'\supsetneqq Q$ coincides with $Q$ outside $\Pi_\ell$, whereas the set $Q'\cap\Pi_\ell$ is bounded. Denote for the brevity
$$
Q_\ell=Q\cap \Pi_\ell; \qquad  Q'_\ell =Q'\cap \Pi_\ell.
$$

\begin{figure}[ht]
    \centering
    \includegraphics[width=0.7\textwidth]{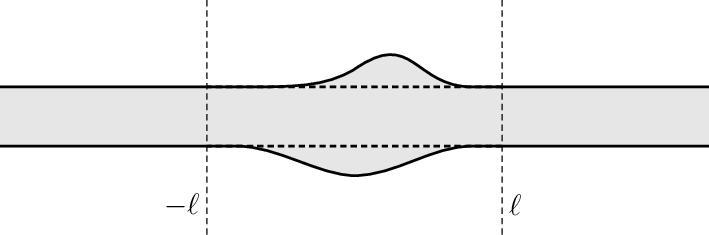}
    \caption{Locally expanded cylinder}
    \label{fig:02}
\end{figure}

By Theorem \ref{T:multitube}, we have
$$
\sigma_{ess}(\mathcal{A}_s^Q)=\sigma_{ess}(\mathcal{A}_s^{Q'})=[\Lambda_s, +\infty),
$$
where $\Lambda_s$ is the smallest eigenvalue of the operator $\mathcal{A}_s^\omega$. Denote the corresponding positive eigenfunction by $\varphi_s(x')$. 

The main result of this section is the following.

\begin{theorem}
The discrete spectrum of $\mathcal{A}_s^{Q'}$ is not empty. Namely, there is at least one eigenvalue in the interval $(0,\Lambda_s)$.
\end{theorem}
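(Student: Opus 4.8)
The plan is to prove the existence of an eigenvalue below $\Lambda_s$ by exhibiting a trial function $u\in\widetilde H^s(Q')$ with $a_s^{Q'}[u]<\Lambda_s\|u;L_2(Q')\|^2$, which by the max–min principle and the description of the essential spectrum forces a point of the discrete spectrum in $(0,\Lambda_s)$. The natural candidate is a truncation of the ground-state profile of the cylinder, namely something like $u(x',z)=\varphi_s(x')\,\eta(z)$ where $\eta$ is a slowly decaying cutoff, plus a small correction supported in the enlarged region $Q_\ell'\setminus Q_\ell$ to exploit the extra room. More precisely, I would take $u=\varphi_s(x')\,\eta(z)+\varepsilon\,w$, where $w\in\widetilde H^s(Q')$ is a fixed function supported near the bulge, nonzero there, and then optimize over $\varepsilon$ and over the decay rate of $\eta$.

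The mechanism is the usual two-term competition. Writing $u_0(x',z)=\varphi_s(x')\eta(z)$, one has $a_s^{Q}[u_0]-\Lambda_s\|u_0;L_2(Q)\|^2=o(\|\eta;L_2(\mathbb R)\|^2)$ as $\eta$ spreads out — here is where the nonlocality and the value of $s$ enter: I would use the Caffarelli–Silvestre representation $a_s^Q[u_0]=C_s\mathcal E_s^Q(U_0)$ together with an admissible competitor of product type to show the excess energy is controlled by a negative power of the length scale. The bulge contributes a \emph{negative} cross term: since $\varphi_s\eta$ restricted to $Q'\setminus Q$ is not the minimizer on the larger cross-section, replacing it there by the genuine larger ground state gains a fixed negative amount $-c\varepsilon$ at first order, while the cost is $O(\varepsilon^2)$ from $a_s^{Q'}[w]$ and from the nonlocal interaction of $w$ with $u_0$, which is finite. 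Choosing $\varepsilon$ small and then $\eta$ spread out enough that the $o(1)$ excess from the tail is smaller than $\tfrac12 c\varepsilon$, we get $a_s^{Q'}[u]<\Lambda_s\|u;L_2(Q')\|^2$.

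The delicate point — and the reason the theorem is restricted to $s\in[\tfrac12,1)$ — is the rate at which the tail energy $a_s^Q[u_0]-\Lambda_s\|u_0;L_2(Q)\|^2$ vanishes as the cutoff $\eta$ is dilated. In the local case $s=1$ this excess is exactly $\|\varphi_s;L_2(\omega)\|^2\|\eta';L_2(\mathbb R)\|^2$, which is $O(L^{-1})$ for a plateau of width $L$ and beats any fixed negative constant. For the nonlocal operator the analogous quantity behaves like $L^{-2s}$ up to the borderline: when $s\ge\tfrac12$ one still has $2s\ge1$ and, after subtracting the $\Lambda_s$-shift, the leading tail term is $o(1)$ as $L\to\infty$ (one needs to be careful that $\varphi_s\eta$ is genuinely in $\widetilde H^s$ and compute the Gagliardo-type seminorm of the product, or better, estimate $\mathcal E_s$ of a product extension $U_s^\omega(x',y)\eta(z)$). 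I would carry this out by the extension method: take $W(x',z,y)=\Phi(x',y)\,\eta(z)$ with $\Phi$ the extension of $\varphi_s$, compute $\mathcal E_s^{Q'}(W)=\|\eta;L_2\|^2\,\mathcal E_s^\omega(\Phi)+\|\Phi;\text{(weighted }L_2)\|^2\,\|\eta';L_2\|^2$, note the first term equals $\Lambda_s\|\eta;L_2\|^2\|\varphi_s;L_2(\omega)\|^2/C_s$, and observe the second term is $O(\|\eta';L_2\|^2)=O(L^{-1})$ — \emph{provided} the weighted $L_2$-norm $\int_0^\infty\int_{\mathbb R^{n-1}}y^{1-2s}\Phi^2\,dx'\,dy$ is finite, which is exactly the condition forcing $s\ge\tfrac12$ near $y=\infty$ (decay of $\Phi$) versus the $y\to0^+$ behavior. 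Assembling these estimates with the negative bulge term and a judicious choice of $L\to\infty$, $\varepsilon\to0$ in the right order completes the argument; the main obstacle is making the finiteness of that weighted integral rigorous and seeing precisely where $s<\tfrac12$ breaks it.
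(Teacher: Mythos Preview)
Your strategy is the paper's: build a trial \emph{extension} of the form ``cross-sectional ground-state profile $\times$ long cutoff $+$ small nonnegative bump in the bulge'' and show its Rayleigh quotient dips below $\Lambda_s$. The paper obtains the negativity of the cross term exactly as you anticipate, from the sign of $\lim_{y\to0^+}y^{1-2s}\partial_y\Phi(x')$ for $x'\notin\overline\omega$, established via a Hopf-type boundary-point lemma for the degenerate equation satisfied by $\Phi$.

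The gap is in your cutoff and in the claim about when the weighted mass $M=\int_0^\infty\!\int_{\mathbb R^{n-1}}y^{1-2s}\Phi^2\,dx'\,dy$ is finite. With the $z$-only cutoff $\eta(z)$, the competitor $W=\Phi(x',y)\eta(z)$ is admissible only if $M<\infty$, and then the excess is $C_sM\|\eta';L_2\|^2$. But $M<\infty$ is \emph{not} equivalent to $s\ge\tfrac12$: a Fourier computation gives $M=C\int_{\mathbb R^{n-1}}|\xi'|^{2s-2}|\widehat{\varphi_s}(\xi')|^2\,d\xi'$, and since $\widehat{\varphi_s}(0)\ne 0$ (because $\varphi_s>0$) the low-frequency integrability condition is $n-1>2-2s$. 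For $n=2$ this forces $s>\tfrac12$ strictly, so at the borderline $s=\tfrac12$ your competitor has infinite energy and the argument does not close. (For $n\ge3$ the integral is finite for every $s\in(0,1)$, which is interesting but does not cover the statement in full.)

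The paper avoids this obstruction by cutting off \emph{jointly} in $(y,z)$: it takes $\rho_\varepsilon\big(\sqrt{y^2+z^2}\big)$ instead of $\eta(z)$. The tail term then involves only the uniform bound $\|\Phi(\cdot,y);L_2(\mathbb R^{n-1})\|\le\|\varphi_s;L_2(\omega)\|$ (no weighted mass $M$ at all) and reduces, after passing to polar coordinates in the $(y,z)$-plane, to $\int t^{1-2s}\big((\rho_\varepsilon')^2+|\rho_\varepsilon''|\big)\,t\,dt=O(\varepsilon^{2s-1})$ for $s>\tfrac12$. At $s=\tfrac12$ the standard scaling cutoff gives only $O(1)$, so the paper replaces $\rho_\varepsilon$ by a logarithmic cutoff to obtain $O(|\log\varepsilon|^{-1})$. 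This joint-in-$(y,z)$ truncation is the missing idea that makes the full range $s\in[\tfrac12,1)$ work uniformly in the dimension~$n$.
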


\begin{proof}
To prove this theorem we show that 
$$
\inf \sigma(\mathcal{A}_s^{Q'})< \Lambda_s.
$$
This can be done via the max-min principle by construction of a function $u\in \widetilde{H}^s(Q')$ that satisfies the inequality
$$
a_s^{Q'}[u]-\Lambda_s\|u;L_2(Q')\|^2<0.
$$
According to \eqref{qq}, it is sufficient to construct a function $W=W(x,y)$  such that
\begin{equation}
\label{A1}
C(s) \,\mathcal{E}_s^{Q'}[W]-\Lambda_s\|W(\cdot,0); L_2(Q')\|^2<0.
\end{equation}
To that end, we introduce a family of functions $W_\varepsilon$, $\varepsilon>0$, in the following way:
$$
W_\varepsilon(x,y)=U(x',y)\rho_\varepsilon(z,y)+w_\varepsilon(x,y).
$$
Here $U$ is the Caffarelli--Silvestre extension of $\varphi_s$,  the correction term $w_\varepsilon$ will be chosen later, whereas $\rho_\varepsilon$ is a cutoff function:
$$
\rho_\varepsilon(z,y):=\left\{
\begin{array}{ll}
\rho(\varepsilon|z|), &\text{if either}\ \ n\ge3\ \ \text{or}\ \  n=2\ \  \text{and}\ \ s\in(\frac 12,1);\\
\rho(\varepsilon\sqrt{y^2+z^2}),  &\text{if}\ \  n=2\ \  \text{and}\ \ s\in(0,\frac 12],
\end{array}
\right.
$$
where $\rho$ is a smooth function on $\mathbb{R}_+$, $\rho(r)\equiv 1$ for $r\le1$ and $\rho(r)\equiv0$ for $r\ge2$, and $\rho'(r)\le0$.

Inserting $W_\varepsilon$ into \eqref{A1} we obtain
\begin{equation}
\label{A2}
C(s) \,\mathcal{E}^{Q'}_s[W_\varepsilon]- \Lambda_s \|W_\varepsilon(\cdot,0); L_2(Q')\|^2={\cal I}_1+{\cal I}_2+{\cal I}_3+{\cal I}_4,
\end{equation}
where
\begin{align*}
{\cal I}_1= &\, C(s) \int\limits_0^\infty \int\limits_{\mathbb{R}^n}y^{1-2s}|\nabla U(x',y)|^2\rho_\varepsilon^2(z,y)\,dxdy 
-\Lambda_s \int\limits_Q\varphi_s^2(x')\rho_\varepsilon^2(z,0)\,dx,\\
{\cal I}_2= &\, C(s) \int\limits_0^\infty \int\limits_{\mathbb{R}^n} y^{1-2s} (2U\nabla U\cdot \rho_\varepsilon\nabla\rho_\varepsilon+U^2|\nabla\rho_\varepsilon|^2)\,dxdy,\\
{\cal I}_3= &\, 2C(s) \int\limits_0^\infty \int\limits_{\mathbb{R}^n}y^{1-2s}\nabla (U\rho_\varepsilon)\cdot\nabla w_\varepsilon \,dxdy -2\Lambda_s \int\limits_Q\varphi_s(x') \rho_\varepsilon(|z|)w_\varepsilon(x,0)\,dx,\\
{\cal I}_4= &\, C(s) \int\limits_0^\infty \int\limits_{\mathbb{R}^n}y^{1-2s}|\nabla w_\varepsilon(x,y)|^2 \,dxdy -\Lambda_s \int\limits_Q w_\varepsilon^2(x,0)\,dx.
\end{align*}
It is easy to see that
$$
{\cal I}_1\le \int\limits_{\mathbb{R}}\!\bigg(C(s) \int\limits_0^\infty \int\limits_{\mathbb{R}^{n-1}}y^{1-2s}|\nabla U(x',y)|^2\,dx'dy 
-\Lambda_s \int\limits_{\omega}\varphi_s^2(x')\,dx'\bigg)\rho^2(\varepsilon|z|)\,dz=0.
$$

Let us estimate ${\cal I}_2$. Notice that if either $n\ge3$ or $n=2$ and $s\in(\frac 12,1)$ then the first term disappears, and we have
$$
{\cal I}_2=C(s) \int\limits_0^\infty \int\limits_{\mathbb{R}^{n-1}} y^{1-2s}U^2\,dx'dy \int\limits_{\mathbb{R}}\big(\rho'_\varepsilon(|z|)\big)^2\,dz
\stackrel{(*)}{\le} C\varepsilon\int\limits_1^2\big(\rho'(r)\big)^2\,dr=O(\varepsilon)
$$
(the inequality ($*$) is due to Lemma \ref{L-2}).

The second case, $n=2$ and $s\in(0,\frac 12]$, is more tricky. We integrate by parts the first term. Similarly to \eqref{ident}, the surface integral disappears, and thus
$$
{\cal I}_2=C(s) \int\limits_0^\infty \int\limits_{\mathbb{R}^2} U^2 \big(y^{1-2s}(\partial_z\rho_\varepsilon)^2-\rho_\varepsilon\partial_y(y^{1-2s}\partial_y\rho_\varepsilon) \big)\,dxdy.
$$

As in the proof of Lemma \ref{L-2}, we use the Fourier transform in $x'$ and write
$$
\int\limits_{\mathbb{R}}U(x',y)^2\,dx'=(2\pi)^\frac 12\int\limits_{\mathbb{R}}|\mathcal{F}_{1}\varphi_s(\xi')|^2
|\widehat{p}\,(y\xi')|^2\,d\xi',
$$
where
$$
\widehat{p}\,(t)=\mathcal{F}_1\mathcal{P}_s(t,1).
$$
By the coordinate transform $y=\varepsilon^{-1} r\sin(\theta)$, $z=\varepsilon^{-1} r\cos(\theta)$ we arrive at
\begin{multline*}
|{\cal I}_2|\le C
\int\limits_1^2 \big(r(\rho^\prime(r))^2+r|\rho^{\prime\prime}(r)|+|\rho^{\prime}(r)|\big)
\int\limits_0^{\frac \pi 2}\big(\varepsilon^{-1} r\sin(\theta)\big)^{1-2s}\times\\
\times\int \limits_{\mathbb{R}}|\mathcal{F}_{1}\varphi_s(\xi')|^2 |\widehat{p}\,(\varepsilon^{-1}r\sin(\theta)\xi')|^2\,d\xi'd\theta dr\\
\le C\int \limits_{\mathbb{R}}|\mathcal{F}_{1}\varphi_s(\xi')|^2
\int\limits_1^2\int\limits_0^{\frac \pi 2}\big(\varepsilon^{-1} r\sin(\theta)\big)^{1-2s}|\widehat{p}\,(\varepsilon^{-1}r\sin(\theta)\xi')|^2\,d\theta drd\xi'.
\end{multline*}

We recall that $\widehat{p}$ decays exponentially and estimate the interior double integral as follows:
\begin{multline*}
\int\limits_1^2\int\limits_0^{\pi/2}\big(\varepsilon^{-1}r\sin(\theta)\big)^{1-2s}|\widehat{p}\,(\varepsilon^{-1}r\sin(\theta)\xi')|^2\,d\theta dr \\
\le C\int\limits_1^2\int\limits_0^{\pi/2}\big(\varepsilon^{-1}r\sin(\theta)\big)^{1-2s}\exp (-c\varepsilon^{-1}r\sin(\theta)|\xi'|)\,d\theta dr\\
\le C\int\limits_0^{\pi/2}\varepsilon^{2s-1}\exp (-c \varepsilon^{-1} \theta|\xi'|)\,d\theta
=C\varepsilon^{2s} \,\frac{1-\exp(-c \varepsilon^{-1}|\xi'|)}{|\xi'|}.
\end{multline*}
Thus,
\begin{multline*}
|{\cal I}_2|\le 
C\varepsilon^{2s} 
\bigg(\int \limits_{|\xi'|\le\varepsilon}+\int \limits_{\varepsilon\le|\xi'|\le 1}+\int \limits_{|\xi'|\ge 1}\bigg)|\mathcal{F}_{1}\varphi_{s}(\xi')|^2  \,\frac{1-\exp(-c \varepsilon^{-1}|\xi'|)}{|\xi'|}\,d\xi'\\
=:{\cal I}_{21}+{\cal I}_{22}+{\cal I}_{23}.
\end{multline*}
Now we recall that $\mathcal{F}_{1}\varphi_{s}$ is smooth, and therefore,
$$
\aligned
{\cal I}_{21}\le &\ C\varepsilon^{2s} \int\limits_0^1 \frac{1-\exp(-c t)}{t}\,dt \le C\varepsilon^{2s}; \\
{\cal I}_{22}\le &\ C\varepsilon^{2s} \int\limits_\varepsilon^1 \frac{d\xi'}{\xi'} \le C\varepsilon^{2s}\log(\varepsilon^{-1}); \\
{\cal I}_{21}\le &\ C\varepsilon^{2s} \int\limits_{\mathbb{R}} |\mathcal{F}_{1}\varphi_{s}(\xi')|^2\,d\xi' \le C\varepsilon^{2s}.
\endaligned
$$
Summing up, we obtain that in any case
$$
{\cal I}_2=O(\delta), \quad \mbox{where}\ \ \delta=\max\{\varepsilon, \varepsilon^{2s}\log(\varepsilon^{-1})\}.
$$

Now we choose $w_\varepsilon(x,y)=\delta^{\frac 12} w(x,y)$, where $w$ is a smooth function supported in $(Q_\ell'\setminus \overline{Q}_\ell)\times[0,1)$.
Then, easily, the last term in ${\cal I}_3$ vanishes, and ${\cal I}_4=O(\delta)$. Further, if $\varepsilon$ is small enough then we can drop $\rho_\varepsilon$ in ${\cal I}_3$ and recall that $U$ satisfies the equation
\begin{equation}
\label{eq:I3}
\,{\rm div} (y^{1-2s}\nabla U)=0 \quad \mbox{in} \quad \mathbb{R}^n\times \mathbb{R}_+.
\end{equation}
Therefore, the integration by parts yields
$$
{\cal I}_3=-2C(s)\delta^{\frac 12} \! \int\limits_{Q_\ell'\setminus \overline{Q}_\ell} \! \lim\limits_{y \to 
0^+} \big(y^{1-2s}~\!{\partial_y} U(x',y)\big) w(x,0)\,dx.
$$

We claim that ${\cal I}_3=-C\delta^{\frac 12}<0$ provided $w\ge0$, $w(\cdot,0)\not\equiv0$. Indeed,
changing the variable $\tau=y^{2s}$, we rewrite the equation \eqref{eq:I3} as follows:
\begin{equation}
\Delta_x U(x',\tau^{\frac 
1{2s}})+4s^2\tau^{\frac{2s-1}s}\partial^2_{\tau\tau}U(x',\tau^{\frac 
1{2s}})=0\quad\mbox{in}\quad \mathbb{R}^n\times \mathbb{R}_+,
\label{eq:KH}
\end{equation}
and
\[
{\cal I}_3=-2C(s)\delta^{\frac 12} \! \int\limits_{Q_\ell'\setminus \overline{Q}_\ell} \! 2s \lim\limits_{\tau\to 0^+}\frac{U(x',\tau^{\frac 1{2s}})}{\tau}\,w(x,0)\,dx.
\]
By the strong maximum principle, $U>0$ in $\mathbb{R}^n\times \mathbb{R}_+$. Since $U(\cdot,0)=0$ in $Q_\ell'\setminus \overline{Q}_\ell$, the differential operator in \eqref{eq:KH} satisfies the assumptions of the generalized boundary point lemma \cite{KH} (see also \cite[p.~201]{ApNa}). Namely, we have
\[
\liminf\limits_{\tau\to 0^+}\frac{U(x',\tau^{\frac 1{2s}})}{\tau}>0,\qquad  x\in Q_\ell'\setminus \overline{Q}_\ell,
\]
and the claim follows. 

Finally, we substitute all obtained estimates into \eqref{A2}. This gives
$$
C(s) \,\mathcal{E}^{Q'}_s[W_\varepsilon]- \Lambda_s \|W_\varepsilon(\cdot,0); L_2(Q')\|^2\le -C\delta^{\frac 12}+O(\delta).
$$
This, in turn, gives \eqref{A1} provided $\delta$ (and therefore $\varepsilon$) is small enough, and completes the proof.
\end{proof}

\begin{remark}
Notice that 
$\lambda_1(\mathcal{A}_s^{Q'})<\big(\lambda_1(-\Delta_{Q'})\big)^s$, cf. Remark \ref{rem1}.
\end{remark}

\paragraph*{Acknowledgements.} 
The results of Section \ref{s:straight} were obtained under support of the Russian Foundation for Basic Research (RFBR) grant 20-51-12004. The results of Sections \ref{s:mult} and \ref{s:incr} were obtained under support of the Russian Science Foundation (RSF) grant 19-71-30002.

\small
\bibliography{Frac_Lap_in_Tubes}

\begin{thebibliography}{10}

\bibitem{Herbst77}
I.~W. Herbst, ``Spectral theory of the operator $(p^2+m^2)^{1/2}-{Ze}^2/r$,''
  {\em Communications in Mathematical Physics}, vol.~53, pp.~285--294, Feb.
  1977.

\bibitem{Nardini86}
F.~Nardini, ``Exponential decay for the eigenfunctions of the two body
  relativistic hamiltonian,'' {\em Journal d'Analyse Mathématique}, vol.~47,
  pp.~87--109, Dec. 1986.

\bibitem{CaMaSi90}
R.~Carmona, W.~C. Masters, and B.~Simon, ``Relativistic {Schrödinger}
  operators: {Asymptotic} behavior of the eigenfunctions,'' {\em Journal of
  Functional Analysis}, vol.~91, pp.~117--142, June 1990.

\bibitem{Triebel}
H.~Triebel, {\em Interpolation theory, function spaces, differential
  operators}.
\newblock North-Holland mathematical library, North-Holland Pub. Co, 1978.

\bibitem{GaSt19}
P.~Garbaczewski and V.~Stephanovich, ``Fractional {Laplacians} in bounded
  domains: Killed, reflected, censored, and taboo {Lévy} flights,'' {\em
  Physical Review E}, vol.~99, p.~042126, Apr. 2019.

\bibitem{AvBeGiMa}
Y.~Avishai, D.~Bessis, B.~G. Giraud, and G.~Mantica, ``Quantum bound states in
  open geometries,'' {\em Physical Review B}, vol.~44, pp.~8028--8034, Oct.
  1991.

\bibitem{NazSA}
S.~A. Nazarov, ``Discrete spectrum of cranked, branching, and periodic
  waveguides,'' {\em St. Petersburg Mathematical Journal}, vol.~23,
  pp.~351--379, Apr. 2012.

\bibitem{NaRuUu2013}
S.~A. Nazarov, K.~Ruotsalainen, and P.~Uusitalo, ``The {Y}-junction of quantum
  waveguides,'' {\em ZAMM - Journal of Applied Mathematics and Mechanics /
  Zeitschrift für Angewandte Mathematik und Mechanik}, vol.~94, pp.~477--486,
  June 2014.

\bibitem{Pa2017}
K.~Pankrashkin, ``Eigenvalue inequalities and absence of threshold resonances
  for waveguide junctions,'' {\em Journal of Mathematical Analysis and
  Applications}, vol.~449, pp.~907--925, May 2017.

\bibitem{ESS}
P.~Exner, P.~Šeba, and P.~Št'oviček, ``On existence of a bound state in an
  {{\textsf{L}}}-shaped waveguide,'' {\em Czechoslovak Journal of Physics},
  vol.~39, pp.~1181--1191, Nov. 1989.

\bibitem{DuEx1995}
P.~Duclos and P.~Exner, ``Curvature-induced bound states in quantum waveguides
  in two and three dimensions,'' {\em Reviews in Mathematical Physics},
  vol.~07, pp.~73--102, Jan. 1995.

\bibitem{GoJa1992}
J.~Goldstone and R.~L. Jaffe, ``Bound states in twisting tubes,'' {\em Physical
  Review B}, vol.~45, pp.~14100--14107, June 1992.

\bibitem{BaMaNa}
F.~L. Bakharev, S.~G. Matveenko, and S.~A. Nazarov, ``The discrete spectrum of
  cross-shaped waveguides,'' {\em St. Petersburg Mathematical Journal},
  vol.~28, pp.~171--180, Feb. 2017.

\bibitem{ExHo2021}
P.~Exner and M.~Holzmann, ``Dirac operator spectrum in tubes and layers with a
  zigzag-type boundary,'' {\em Letters in Mathematical Physics}, vol.~112,
  no.~5, p.~102, 2022.

\bibitem{BoBrKrOu-2021}
W.~Borrelli, P.~Briet, D.~Krejčiřík, and T.~Ourmières-Bonafos, ``Spectral
  properties of relativistic quantum waveguides,'' {\em Annales Henri
  Poincaré}, vol.~23, p.~4069–4114, Apr. 2022.

\bibitem{LiNa2019}
M.~A. Lifshits and A.~I. Nazarov, ``On {Brownian} exit times from perturbed
  multi-strips,'' {\em Statistics \& Probability Letters}, vol.~147, pp.~1--5,
  Apr. 2019.

\bibitem{EK}
P.~Exner and H.~Kovařík, {\em Quantum waveguides}.
\newblock Theoretical and mathematical physics, Cham Heidelberg: Springer,
  2015.

\bibitem{MO}
S.~A. Molchanov and E.~Ostrovskii, ``Symmetric stable processes as traces of
  degenerate diffusion processes,'' {\em Theory of Probability \& Its
  Applications}, vol.~14, pp.~128--131, Jan. 1969.

\bibitem{CaSi}
L.~Caffarelli and L.~Silvestre, ``An {Extension} {Problem} {Related} to the
  {Fractional} {Laplacian},'' {\em Communications in Partial Differential
  Equations}, vol.~32, pp.~1245--1260, Aug. 2007.

\bibitem{MN}
R.~Musina and A.~I. Nazarov, ``On the {Sobolev} and {Hardy} constants for the
  fractional {Navier} {Laplacian},'' {\em Nonlinear Analysis: Theory, Methods
  \& Applications}, vol.~121, pp.~123--129, 2015.

\bibitem{IaMoSq2015}
A.~Iannizzotto, S.~Mosconi, and M.~Squassina, ``{$H^s$} versus {$C^0$}-weighted
  minimizers,'' {\em Nonlinear Differential Equations and Applications
  {NoDEA}}, vol.~22, no.~3, pp.~477--497, 2015.

\bibitem{MN2019}
R.~Musina and A.~I. Nazarov, ``Strong maximum principles for fractional
  {Laplacians},'' {\em Proceedings of the Royal Society of Edinburgh: Section A
  Mathematics}, vol.~149, no.~5, pp.~1223--1240, 2019.

\bibitem{BS}
M.~S. Birman and M.~Z. Solomiak, {\em Spectral theory of self-adjoint operators
  in {Hilbert} space}.
\newblock Mathematics and its applications ({Soviet} series), Dordrecht ;
  Boston : Norwell, MA, U.S.A: D. Reidel Pub. Co. ; Sold and distributed in the
  U.S.A. and Canada by Kluwer Academic Publishers, 1987.

\bibitem{MuNa2014}
R.~Musina and A.~I. Nazarov, ``On fractional {Laplacians},'' {\em
  Communications in Partial Differential Equations}, vol.~39, no.~9,
  pp.~1780--1790, 2014.

\bibitem{Na2021}
A.~I. Nazarov, ``Variety of fractional {Laplacians},'' Aug. 2021.
\newblock arXiv:2108.12924 [math].

\bibitem{DyKuKw2017}
B.~Dyda, A.~Kuznetsov, and M.~Kwaśnicki, ``Eigenvalues of the fractional
  {Laplace} operator in the unit ball,'' {\em Journal of the London
  Mathematical Society}, vol.~95, no.~2, pp.~500--518, 2017.

\bibitem{Kw2012}
M.~Kwaśnicki, ``Eigenvalues of the fractional {Laplace} operator in the
  interval,'' {\em Journal of Functional Analysis}, vol.~262, no.~5,
  pp.~2379--2402, 2012.

\bibitem{BaNa20}
F.~L. Bakharev and A.~I. Nazarov, ``Existence of the discrete spectrum in the
  {Fichera} layers and crosses of arbitrary dimension,'' {\em Journal of
  Functional Analysis}, vol.~281, p.~109071, Aug. 2021.

\bibitem{K}
I.~V. Kamotskiĭ, ``Surface wave running along the edge of an elastic wedge,''
  {\em St. Petersburg Mathematical Journal}, vol.~20, pp.~59--63, Nov. 2008.

\bibitem{NS}
A.~I. Nazarov and A.~P. Shcheglova, ``Solutions with various structures for
  semilinear equations in $\mathbb{R}^n$ driven by fractional {Laplacian},''
  {\em Calculus of Variations and Partial Differential Equations}, vol.~62,
  no.~4, 2023.

\bibitem{MN20}
R.~Musina and A.~I. Nazarov, ``A weighted estimate for generalized harmonic
  extensions,'' {\em Mathematical Inequalities \& Applications}, vol.~23,
  no.~2, pp.~419--424, 2020.

\bibitem{MN20a}
R.~Musina and A.~I. Nazarov, ``A tool for symmetry breaking and multiplicity in
  some nonlocal problems,'' {\em Mathematical Methods in the Applied Sciences},
  vol.~43, pp.~9345--9357, Nov. 2020.

\bibitem{KH}
L.~I. Kamynin and B.~N. Khimchenko, ``Theorems of the {Giraud} type for
  second-order equations with weakly degenerate nonnegative characteristic
  part,'' {\em Siberian Mathematical Journal}, vol.~18, no.~1, pp.~76--91,
  1977.

\bibitem{ApNa}
D.~E. Apushkinskaya and A.~I. Nazarov, ``The normal derivative lemma and
  surrounding issues,'' {\em Russian Mathematical Surveys}, vol.~77, no.~2,
  pp.~189--249, 2022.

\end{thebibliography}

\end{document}